\documentclass[12pt]{article}

\usepackage{blindtext} 

\usepackage[sc]{mathpazo} 
\usepackage[T1]{fontenc} 
\linespread{1.05} 
\usepackage{microtype} 

\usepackage[english]{babel} 

\usepackage[hmarginratio=1:1,top=32mm,columnsep=20pt]{geometry} 
\usepackage[hang, small,labelfont=bf,up,textfont=it,up]{caption} 
\usepackage{booktabs} 

\usepackage{lettrine} 

\usepackage{enumitem} 
\setlist[itemize]{noitemsep} 

\usepackage{abstract} 

\usepackage{titlesec} 
\renewcommand\thesection{\Roman{section}} 
\renewcommand\thesubsection{\roman{subsection}} 
\titleformat{\section}[block]{\large\scshape\centering}{\thesection.}{1em}{} 
\titleformat{\subsection}[block]{\large}{\thesubsection.}{1em}{} 

\usepackage{fancyhdr} 
\pagestyle{fancy} 
\fancyhead{} 
\fancyfoot{} 
\fancyfoot[RO,LE]{\thepage} 

\usepackage{titling} 

\usepackage{hyperref} 


\setlength{\droptitle}{-4\baselineskip} 

\pretitle{\begin{center}\Huge\bfseries} 
\posttitle{\end{center}} 
\title{Analyticity of Exponential Dirichlet Series and Applications to the Approximate Controllability of Parabolic Equations}

\author{%
\textsc{M. Ouzahra}\\
\normalsize MASI Laboratory,  Sidi Mohamed Ben Abdellah University  \\
Graduate Normal School of Fes\\
\normalsize \href{mohamed.ouzahra@usmba.ac.ma}{mohamed.ouzahra@usmba.ac.ma} }
\date{}


\newtheorem{theorem}{Theorem}[section]
\newtheorem{corollary}{Corollary}

\newtheorem{remark}{Remark}

\newtheorem{proof}{Proof}

\begin{document}

\maketitle

\section{Introduction}\label{sec1}

The analyticity of Dirichlet series, particularly in terms of the parameters they depend on, provides a critical tool for evaluating the behavior of systems in control theory. For example, the analytic continuation property of these series can clarify the approximate observability, controllability, and stability of evolution systems, facilitating the analysis and design of control strategies \cite{avdonin92, avdonin95, ball79, curtain78}. Several theoretical approaches have been developed to analyze the approximate controllability of PDEs. For parabolic equations with analytic coefficients, the Holmgren Uniqueness Principle provides a direct proof (see, e.g., \cite{hormander03, rosier07, zuazua09}). In the specific case of linear heat equations, approximate controllability follows from the property of null controllability, a relationship that has been extensively studied in \cite{zuazua00}. One widely used approach to establish null controllability for parabolic PDEs relies on Carleman estimates \cite{emanuilov}. The moment method, in particular, has been applied to prove the null controllability of the one-dimensional heat equation with localized controls \cite{russell}. Other significant contributions to the null controllability of PDEs include the works of D.L. Russell \cite{russell73}, G. Lebeau and L. Robbiano \cite{lebeau95}, A. Fursikov and O. Yu. Imanuvilov \cite{FI96}, and E. Fern\'andez-Cara \cite{FC97}.\\
 The moment method remains a cornerstone in the study of approximate controllability for parabolic systems, particularly in the design of actuators and sensors \cite{el jai 87, el jai 95, privat17}. A key aspect of the moment  method for approximate controllability, is its reliance on the analyticity of exponential Dirichlet series associated with the spectrum of the system's governing operator (\cite{curtain78}, Lemma 3.14, p. 62, see also \cite{el jai 87,el jai 95}).

In this paper, we revisit the properties of exponential Dirichlet series  by providing a detailed proof of the analyticity of exponential Dirichlet series under new conditions on their defining parameters. We explicitly determine the coefficients of their power series decomposition and derive an estimate for the remainder term. As an application, we employ this framework to study the approximate controllability of linear parabolic equations with locally distributed or lumped controls using the moment method.

The paper is organized as follows: In Section 2, we establish sufficient conditions for the analyticity of real Dirichlet series. In Section 3, we apply the moment method to prove the approximate controllability of the heat equation.
Finally, Section 4 provides a conclusion.

\section{Analyticity of real exponential Dirichlet series}

We have the following result:

\begin{theorem}\label{thm}
Let \( (\lambda_j)_{j \geq 1}  \) and \( (\alpha_j)_{j \geq 1} \) be two real sequences such that:

$\star $ The $\lambda_j $ are assumed to be positive, except for at most a finite number of them.

 $\star $ The series $\sum_{j \geq 1} \alpha_j$ is sommable:
\[
   \sum_{j = 1}^{+\infty} |\alpha_j| < +\infty. \hspace{3cm} ({\cal H}_0)
\]
Let us define the function
\[
\varphi(t) = \sum_{j=1}^{+\infty} \alpha_j e^{-\lambda_j t}, \quad \forall t \geq 0.
\]
Then \( \varphi \) is analytic in \( \mathbb{R}^{*+} \). More precisely, for any \( \tau > 0 \), we have
\[
\varphi(t) = \sum_{n=0}^{\infty} b_n (t - \tau)^n, \quad \forall t \in (0, 2 \tau),
\]
where
\[
b_n = \sum_{j=1}^{\infty} \alpha_j e^{-\lambda_j \tau} \frac{(-\lambda_j)^n}{n!}.
\]
Furthermore, for all \( t \in (0, 2 \tau) \), we have
\begin{equation}\label{rest}
\left| \varphi(t) - \sum_{j=0}^{n} b_j (t - \tau)^j \right| = O\left( \frac{|t-\tau|^n }{\tau^n \sqrt{n}} \right), \quad \mbox{as } n \to +\infty.
\end{equation}
\end{theorem}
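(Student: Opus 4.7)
The plan is to obtain the expansion by expanding each exponential in its own Taylor series around $\tau$, then exchanging the order of summation by Fubini, and finally controlling the tail uniformly in $j$ via an extremal-value argument combined with Stirling's formula.

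First, I would fix $\tau > 0$ and $t \in (0, 2\tau)$, set $r := |t-\tau|/\tau < 1$, and write
\[
e^{-\lambda_j t} = e^{-\lambda_j \tau}\sum_{k=0}^\infty \frac{(-\lambda_j)^k}{k!}(t-\tau)^k.
\]
To justify inserting this into the defining series for $\varphi(t)$ and swapping the two sums, I would verify absolute convergence of the resulting double sum
\[
\sum_{j\geq 1} |\alpha_j|\, e^{-\lambda_j \tau}\sum_{k\geq 0} \frac{\lambda_j^k |t-\tau|^k}{k!} = \sum_{j\geq 1} |\alpha_j|\, e^{-\lambda_j(\tau - |t-\tau|)}.
\]
For the indices with $\lambda_j > 0$, the exponent is non-positive since $\tau - |t-\tau| > 0$, so the factor is bounded by $1$; for the finitely many exceptional indices the factor is bounded by a constant. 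Hypothesis $(\mathcal{H}_0)$ then yields finiteness, Fubini lets me reorder, and the identification of $b_n$ as stated is immediate. Since $\tau > 0$ is arbitrary and the radius of convergence at $\tau$ is at least $\tau$, analyticity of $\varphi$ on $\mathbb{R}^{*+}$ follows.

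For the remainder estimate, I would bound
\[
\bigl| \varphi(t) - \sum_{k=0}^n b_k (t-\tau)^k \bigr| \leq \sum_{j\geq 1}|\alpha_j|\, e^{-\lambda_j \tau}\sum_{k > n}\frac{\lambda_j^k|t-\tau|^k}{k!}.
\]
The key inequality is a uniform estimate in $\lambda_j \geq 0$: the function $\lambda \mapsto e^{-\lambda \tau}\lambda^k$ is maximized at $\lambda = k/\tau$ with maximum value $(k/\tau)^k e^{-k}$, so by Stirling's lower bound $k! \geq \sqrt{2\pi k}(k/e)^k$,
\[
e^{-\lambda_j \tau}\frac{\lambda_j^k}{k!} \leq \frac{1}{\tau^k\sqrt{2\pi k}}.
\]
Plugging this in gives a bound by $\sum_{k>n} r^k/\sqrt{2\pi k}$, and a geometric tail estimate for $r<1$ produces the desired $O(r^n/\sqrt{n})$ rate; the contribution of the finitely many indices with $\lambda_j \leq 0$ yields a factorial tail that is asymptotically negligible compared with $r^n/\sqrt{n}$.

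The main obstacle I anticipate is the $1/\sqrt{n}$ factor in \eqref{rest}: the naive remainder bound obtained from $|e^x - \sum_{k \leq n} x^k/k!| \leq |x|^{n+1}/(n+1)! \cdot e^{|x|}$ fails to be uniform in $j$ because $\lambda_j$ is unbounded. The extremal argument above circumvents this exactly because the extra $1/\sqrt{k}$ produced by Stirling, rather than appearing as a technicality, is what matches the stated asymptotic; getting the sharp power of $\tau$ in the denominator also requires that the maximization in $\lambda$ be carried out against $e^{-\lambda\tau}$ rather than against $e^{-\lambda(\tau - |t-\tau|)}$.
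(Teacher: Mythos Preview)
Your proposal is correct and follows essentially the same route as the paper: Taylor-expand each exponential at $\tau$, justify the interchange of sums by absolute convergence, and obtain the remainder bound via the extremal inequality $e^{-\lambda\tau}\lambda^k\le (k/(e\tau))^k$ combined with Stirling. The only cosmetic differences are that the paper disposes of the finitely many non-positive $\lambda_j$ by a WLOG reduction at the outset rather than tracking them separately, and it verifies the Fubini hypothesis by bounding $a_n:=\sum_j|\alpha_j|e^{-\lambda_j\tau}\lambda_j^n/n!$ via the same Stirling estimate instead of resumming the inner series as you do.
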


\begin{proof}

Without loss of generality, we  can suppose that all the $\lambda_j$ are positive. Then,  it is straightforward to verify that $\varphi$ is well-defined on $\mathbb{R}^+$.\\
Let $\tau \in \mathbb{R}^{+*}.$ Then, by expanding the exponential function around  $\tau \in \mathbb{R},$ the function $\varphi(t)$ can be represented as a double series:
    $$
    \varphi(t) = \sum_{j=1}^\infty \alpha_j e^{-\lambda_j \tau} \sum_{n=0}^\infty \left( \frac{(-\lambda_j)^n}{n!} (t - \tau)^n \right), \; \forall t \in \mathbb{R}^+.
    $$
For all $n \geq 0,$ we define:
 $$
        a_n = \sum_{j=1}^\infty |\alpha_j| e^{-\lambda_j \tau} \frac{(\lambda_j)^n}{n!}.$$
     We can easily show that
     $$        e^{-\lambda_j \tau} (\lambda_j)^n \leq \left( \frac{n}{e \tau} \right)^n,\; \forall j \geq 1, \forall n \geq 0.
     $$
     Then, using the Stirling estimate, we derive the asymptotic estimate:
     $$a_n = O\left( \frac{1}{\tau^n \sqrt{n}} \right),$$
     where  $O$ is the Landau symbol related to the domination property.\\
      Hence, for all $ t \in (0, 2 \tau), $ we have:
$$
\sum_{n=0}^\infty \left( \sum_{j=1}^\infty \left| \alpha_j e^{-\lambda_j \tau} \frac{(-\lambda_j)^n}{n!} \right| \right) |t - \tau|^n < +\infty.
$$
Applying  the  Lebesgue term-by-term integration theorem  with respect to the discrete measure, we deduce that
        $$
        \varphi(t) = \sum_{n=0}^\infty \left( \sum_{j=1}^\infty \alpha_j e^{-\lambda_j \tau} \frac{(-\lambda_j)^n}{n!} \right) (t - \tau)^n.
        $$
        which   can be write, for all $t \in (0, 2\tau),$ as
        $$
        \varphi(t) = \sum_{n=0}^\infty b_n (t - \tau)^n.
        $$
        with $$b_n = \sum_{j=1}^{\infty} \alpha_j e^{-\lambda_j \tau} \frac{(-\lambda_j)^n}{n!}.$$
      In other words,  $\varphi$ can be expanded as a power series at any point $\tau>0$.
     We conclude that $\varphi(t) $ is analytic on $(0,+\infty)$.\\
     Moreover, using that $$|b_n|\le a_n = O\left( \frac{1}{\tau^n \sqrt{n}} \right),$$ we deduce that
$$
\begin{array}{ccc}
  |\varphi(t)-\displaystyle \sum_{j=0}^n b_j (t - \tau)^j|   & \le & \displaystyle\sum_{j\ge n+1}^\infty a_j |t - \tau|^j\\
 \\
   & \le & \frac{C_\tau }{\sqrt n} \displaystyle\sum_{j\ge n+1}^\infty \frac{ |t - \tau|^j}{\tau^j} \\
   \\
   & = & \displaystyle\frac{C_\tau }{\sqrt n} \frac{ |t - \tau|^{n+1}}{\tau^{n+1}},
\end{array}
$$
where $C_\tau $ is a positive constant that depends on $\tau.$\\
Then the estimate (\ref{rest}) follows.

\end{proof}

%
%

%

\begin{remark}
 \begin{itemize}

   \item Another alternative for proving the analyticity in the lemma is as follows:  Without loss of generality, we can assume that $ \sum_{j \geq 1} \lambda_j |\alpha_j| < +\infty $, since if this is not the case, we can simply replace $\varphi$ by its primitive.
Furthermore,  we can assume  that $ (\lambda_j)_{j \geq 1} \subset \mathbb{R}^{*+} $.
Then, using the theorem of term-by-term differentiation, we can show that $ \varphi $ is of class $ \mathcal{C}^\infty $ on $ \mathbb{R}^{*+} $,
 and that for all $ k \in \mathbb{N} $,
$
\varphi^{(k)}(t) = \sum_{j=1}^{+\infty} (-1)^k \alpha_j \lambda_j^k e^{-\lambda_j t},\; \forall t > 0, \forall k \in \mathbb{N}
$, from which it comes:\\
$
\left|\frac{t^k \varphi^{(k+1)}(t)}{k!}\right| \leq \frac{k^k e^{-k}}{k!} \sum_{j=1}^{+\infty} \lambda_j |\alpha_j|,\; \forall t > 0.
$
     Hence, the remainder term  of the Taylor expansion, given by $ R_k(t) = \int_0^t \frac{s^k \varphi^{(k+1)}(s)}{k!} ds,\; t > 0, \; k \in \mathbb{N} $ satisfies the following estimate  for any $ 0 \leq a < b $:
$
\sup_{a \leq t \leq b} |R_k(t)| = O\left(\frac{k^k e^{-k}}{k!}\right) \to 0$, as $k\to+\infty$. Hence the analyticity of $\varphi$ on $(0,+\infty) $ follows.\\

 \item For the analyticity of \(\varphi\), one can also extend \(\varphi\) to \(\mathbf{C}\) by setting \(z = t + i s\), show that this extension is holomorphic on the half-plane \({\cal I}m(z) > 0\), and then use the equivalence between holomorphy and analyticity on an open set.

 \end{itemize}
\end{remark}

In the following result, we provide a relaxed  version of $ ({\cal H}_0) $ that ensures the analyticity of the exponential Dirichlet series.

\begin{corollary}
Let $(\lambda_j)_{j \geq 1} \subset \mathbb{R}^{*+}$ and $(\alpha_j)_{j \geq 1} \subset \mathbb{R}$ be two sequences such that for some $k \geq 0$, we have
$$
 \sum_{j \geq 1} \frac{|\alpha_j|}{|\lambda_j|^k} < +\infty, \hspace{3cm} ({\cal H}_k)
$$
and consider the function
\begin{equation}
\varphi(t) = \sum_{j=1}^{+\infty} \alpha_j e^{-\lambda_j t}, \quad t \in \mathbb{R}^+.
\end{equation}
Then $\varphi$ is analytic on $\mathbb{R}^{*+}.$
\end{corollary}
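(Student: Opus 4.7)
The plan is to reduce the corollary to the theorem by means of a time shift that absorbs the $\lambda_j^k$ factor using the exponential decay of $e^{-\lambda_j \tau}$. More precisely, I will show that for any fixed $\tau>0$, the shifted coefficients $\beta_j := \alpha_j e^{-\lambda_j \tau}$ satisfy the absolute summability hypothesis $({\cal H}_0)$ of the theorem, so that the auxiliary function $\psi(s) = \sum_j \beta_j e^{-\lambda_j s}$ is analytic on $(0,+\infty)$. Since $\varphi(t) = \psi(t-\tau)$ for $t>\tau$, the analyticity of $\varphi$ on $(\tau,+\infty)$ follows, and letting $\tau \to 0^+$ gives analyticity on $(0,+\infty)$.

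The key estimate for step one is the elementary inequality $x^k e^{-x\tau} \leq \bigl(k/(e\tau)\bigr)^k$ for $x>0$ (already used in the proof of the theorem, via maximising $x \mapsto x^k e^{-x\tau}$ at $x=k/\tau$). Writing
\[
\sum_{j\ge 1} |\beta_j| = \sum_{j\ge 1} \frac{|\alpha_j|}{\lambda_j^k}\,\bigl(\lambda_j^k e^{-\lambda_j \tau}\bigr) \leq \left(\frac{k}{e\tau}\right)^{k} \sum_{j\ge 1} \frac{|\alpha_j|}{\lambda_j^k},
\]
the hypothesis $({\cal H}_k)$ immediately yields $\sum_j |\beta_j| < +\infty$. (When $k=0$ there is nothing to do, since $({\cal H}_0)$ is already assumed.)

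Then I invoke Theorem \ref{thm} applied to the sequences $(\lambda_j)$ and $(\beta_j)$, which tells us that $\psi(s) = \sum_{j\ge 1} \beta_j e^{-\lambda_j s}$ is analytic on $(0,+\infty)$. The identity
\[
\varphi(t) = \sum_{j\ge 1} \alpha_j e^{-\lambda_j \tau}\, e^{-\lambda_j(t-\tau)} = \psi(t-\tau), \quad t>\tau,
\]
shows $\varphi$ is analytic on $(\tau,+\infty)$. Since $\tau>0$ was arbitrary, analyticity of $\varphi$ holds throughout $(0,+\infty)$.

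There is no real obstacle here; the only point requiring care is that the hypothesis $({\cal H}_k)$ alone is not sufficient to make $\varphi$ well-defined or analytic at $t=0$, and indeed the conclusion is only about $\mathbb{R}^{*+}$. Hence the shift by $\tau>0$ must be genuine, but can be taken arbitrarily small, which is exactly what allows us to recover the full half-line.
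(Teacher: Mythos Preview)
Your proof is correct, but it differs from the paper's argument. The paper instead defines the auxiliary function $\phi(t)=\sum_{j\ge 1}\frac{\alpha_j}{\lambda_j^k}\,e^{-\lambda_j t}$, whose coefficients satisfy $({\cal H}_0)$ directly; Theorem~\ref{thm} then gives analyticity of $\phi$ on $(0,+\infty)$, and since $\phi^{(k)}=(-1)^k\varphi$ (the paper omits the sign), analyticity of $\varphi$ follows because derivatives of analytic functions are analytic. Your approach replaces this ``divide by $\lambda_j^k$ and then differentiate'' trick with a time shift: you absorb the missing decay via the factor $e^{-\lambda_j\tau}$, using the same pointwise bound $x^k e^{-x\tau}\le (k/(e\tau))^k$ that already appears in the proof of the theorem. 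The advantage of your route is that it avoids having to justify term-by-term differentiation of the Dirichlet series (a step the paper glosses over) and it works uniformly for non-integer $k\ge 0$ without modification. The paper's route, on the other hand, is a one-line reduction once the differentiation is granted, and it makes the structural link between $({\cal H}_k)$ and $({\cal H}_0)$ more transparent: hypothesis $({\cal H}_k)$ is exactly $({\cal H}_0)$ for the $k$-fold antiderivative.
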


\begin{proof}
We define the function
\begin{equation}
\phi(t) = \sum_{j=1}^{+\infty} \frac{\alpha_j}{\lambda_j^k} e^{-\lambda_j t}, \quad \forall t \geq 0.
\end{equation}
From Theorem \ref{thm}, we deduce that $\phi$ is analytic in $\mathbb{R}^{*+}.$ Consequently, its $k$-th derivative,
\begin{equation}
\phi^{(k)}(t) = \varphi(t),
\end{equation}
is also analytic on $\mathbb{R}^{*+}.$
\end{proof}

The next result provides an application of the analyticity of the exponential Dirichlet series.

\begin{corollary}\label{cor2}
Let \( (\lambda_j)_{j \geq 1} \subset \mathbb{R} \) be a strictly increasing sequence, and let \( (\alpha_j)_{j \geq 1} \subset \mathbb{R} \) be a sequence such that the assumption $({\cal H}_k)$ holds for some \( k \geq 0 \).\\
Assume that, for some \( T > 0 \), the following holds:
\begin{equation} \label{*}
\sum_{j=1}^{+\infty} \alpha_j e^{-\lambda_j t} = 0, \quad \forall t \in [0,T].
\end{equation}
Then \( \alpha_j = 0 \) for all \( j \geq 1 \).
\end{corollary}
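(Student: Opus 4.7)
The plan is to reduce the assertion to a uniqueness property for exponential sums, after first promoting the vanishing of $\varphi$ on $[0,T]$ to a global vanishing on $(0,+\infty)$ via analyticity.

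\emph{Step 1.} Under $(\mathcal{H}_k)$ the preceding corollary gives real-analyticity of $\varphi$ on the connected open set $(0,+\infty)$. Since $\varphi \equiv 0$ on the subinterval $(0,T)$, the identity theorem for real-analytic functions immediately propagates the vanishing, giving $\varphi \equiv 0$ on $(0,+\infty)$.

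\emph{Step 2.} I would then argue by contradiction via the classical ``peeling off'' of the leading frequency. Assume not all $\alpha_j$ vanish and set $j_0 = \min\{j \geq 1 : \alpha_j \neq 0\}$. Multiplying the identity $\varphi(t)=0$ by $e^{\lambda_{j_0} t}$ yields
$$
\alpha_{j_0} + \sum_{j > j_0} \alpha_j \, e^{-(\lambda_j - \lambda_{j_0}) t} = 0, \qquad \forall\, t > 0,
$$
where every exponent $\lambda_j - \lambda_{j_0}$ is strictly positive since $(\lambda_j)$ is strictly increasing. Letting $t \to +\infty$ term by term forces $\alpha_{j_0} = 0$, contradicting the choice of $j_0$. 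An obvious induction then disposes of the remaining coefficients.

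\emph{The main obstacle} lies precisely in justifying the termwise passage $t \to +\infty$, as $(\mathcal{H}_k)$ does not directly furnish absolute summability of the tail. I would fix some $t_0 > 0$ and dominate, for every $t \ge t_0$,
$$
\bigl| \alpha_j \, e^{-(\lambda_j - \lambda_{j_0})t} \bigr| \;\leq\; \frac{|\alpha_j|}{|\lambda_j|^k} \cdot |\lambda_j|^k\, e^{-(\lambda_j - \lambda_{j_0}) t_0}.
$$
The sequence $|\lambda_j|^k e^{-(\lambda_j - \lambda_{j_0}) t_0}$ is bounded in $j$: either $(\lambda_j)$ has a finite limit (so $|\lambda_j|^k$ is already bounded), or $\lambda_j \to +\infty$ and the exponential factor crushes the polynomial. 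Combined with $(\mathcal{H}_k)$, this produces a $j$-summable majorant independent of $t \ge t_0$, so the dominated convergence theorem justifies the interchange of sum and limit, closing the argument.
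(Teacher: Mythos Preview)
Your argument is correct and follows essentially the same route as the paper: extend the vanishing from $[0,T]$ to $(0,+\infty)$ by analyticity, then peel off the smallest frequency by multiplying through by $e^{\lambda_{j_0}t}$ and sending $t\to+\infty$. The only point you skip is that the preceding corollary on analyticity is stated for $(\lambda_j)\subset\mathbb{R}^{*+}$, whereas here the $\lambda_j$ are merely strictly increasing in $\mathbb{R}$; the paper handles this by a preliminary shift $\lambda_j\mapsto\lambda_j-\lambda_1$ (equivalently, multiplying the identity by a suitable exponential), which you should insert before invoking analyticity. On the other hand, your dominated-convergence justification of the termwise limit is more careful than the paper's, which simply asserts the conclusion.
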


\begin{proof}
Without loss of generality, assume that \( (\lambda_j)_{j \geq 1} \subset \mathbb{R}^{*+} \). If this is not the case, we can multiply the relation~(\ref{*}) by \( e^{-\lambda_1 t} \) and replace \( \lambda_j \) by \( \lambda_j - \lambda_1 \).\\
By the previous corollary, the function defined by
\[
\varphi(t) = \sum_{j=1}^{+\infty} \alpha_j  e^{-\lambda_j t}, \quad \forall t \geq 0
\]
is analytic on \( \mathbb{R}^{*+} \). Therefore, from (\ref{*}), we deduce that \( \varphi(t) = 0 \) for all \( t \geq 0 \).\\
Dividing by \( e^{\lambda_1 t} \) and letting \( t \to +\infty \), we conclude that \( \alpha_1 = 0 \). Repeating this argument iteratively for the remaining indices, we conclude, by induction, that \( \alpha_j = 0 \) for all \( j \geq 1 \).\\
This completes the proof.

\end{proof}

\begin{remark} Under the assumption $({\cal H}_0),$ we recover, by the last corollary, the result of Lemma 3.14 in \cite{curtain78}.

\end{remark}

\section{Application to approximate controllability  of the heat equation}

\subsection{Preliminary on the approximate controllability}

Let us consider the linear system:
\begin{equation}\label{sl}
\left\{
\begin{array}{ll}
  \dot{z}(t) =& Az(t) + Bu(t), \\
  z(0) = & z_0\in H,
\end{array}
\right.
\end{equation}

where, the state  space $H$ is a Hilbert endowed  with an inner product $\langle\cdot,\cdot\rangle$ with associate  norm $\|\cdot\|$, and the control space $U$ is also a Hilbert space with Hilbertian norm $|\cdot|.$
\\
The operator $A: D(A) \subset H \to H$ is the dynamic of the system, which generates a $C_0$-semigroup $S(t).$
The control operator $B: U \to H$  is  assumed to be linear and bounded.

For any $u\in {\cal U}_T:=L^2(0,T;U),$ the space of Bochner-integrable functions, the system (\ref{sl}) admits a unique mild solution given by the following variation of constant formula (see \cite{pazy}, Chap. 4):
$$
z(t)=S(t)z_0+\int_0^t S(t-s)Bu(s)ds,\; \forall t\in [0,T].
$$
The system (\ref{sl}) is said to be exactly controllable at time $T, $ if for all initial state $z_0\in H$ and target state $z_1\in H$, there exists  $u\in {\cal U}_T$ such that $z(T)=z_1.$  This is equivalent to the surjectivity of the following bounded (controllability)  operator:
 $$
G : {\cal U}_T  \mapsto H;\; Gu= \int_0^t S(t-s)Bu(s)ds.
$$

\begin{remark}
If the state space $H$ is infinite-dimensional and if either $B$ or $S(t)$ is compact, then the Bair's theorem implies that the system
(\ref{sl}) cannot be exactly controllable on $[0,T]$ (see \cite{trigg77}).
\end{remark}


The above remark prevents us from considering the exact controllability for PDEs of parabolic type and implies that approximate controllability is more suitable for such equations, including the heat equation:
 Approximate controllability focuses on finding square integrable controls $u $ that steer the solution arbitrarily close to a desired target state,
 rather than requiring exact attainment of the target.  More precisely, system (\ref{sl}) is said to be approximately controllable at time $ T $ if, for any $ z_0 \in H $, $ z_1 \in H $,
  and $ \varepsilon > 0 $, there exists a control $ u \in L^2(0, T;U) $,  such that the respective solution of system (\ref{sl}) is globally defined on $ [0, T] $ and satisfies:
$$
\|z(T)-z_1\| \le \varepsilon\cdot
$$

In other words, system (\ref{sl}) is approximately controllable if the set of reachable states is dense in $H$. This is equivalent to the density of the range $\Im(G)$ of $G$ in $H$, which is also equivalent to $\ker(G^*)=\{0\}.$

Moreover, it can be easily seen that the adjoint operator $G^*$ of $G : H \rightarrow L^2(0,T;U)$ is given by
$$G^* z=B^*S^*(T-\cdot)z,$$
i.e., $(G^* z)(t)=B^*S^*(T-t)z$.

It follows that

$$y\in \ker(G^*) \Leftrightarrow B^*S^*(T-t)y=0,\; \forall t\in [0,T]$$
$$
\Leftrightarrow B^*S^*(t)y=0,\; \forall t\in [0,T].
$$

We then obtain the following characterization of approximate controllability (see \cite{curtain78}).

\begin{theorem}\label{thmca}
The system (\ref{sl}) is approximately controllable if and only if for every $y \in H$ (equiv. $y\in \ker(B^*))$, we have:
\begin{equation}\label{obsw}
 B^*S^*(t)y, \; \forall t \in [0,T] \Rightarrow y = 0.
\end{equation}
\end{theorem}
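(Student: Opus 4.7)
The proof is essentially a synthesis of the facts already collected in the preliminary discussion, so my plan is simply to assemble them in the correct order.

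The first step is to notice that, by the variation-of-constants formula, the set of states reachable at time $T$ from an arbitrary initial state $z_0$ is the affine translate $S(T)z_0 + \operatorname{Im}(G)$. Consequently, approximate controllability (density of the reachable set in $H$ for every $z_0$) is equivalent to $\overline{\operatorname{Im}(G)} = H$. Invoking the standard Hilbert space orthogonality identity $\overline{\operatorname{Im}(G)} = \ker(G^*)^{\perp}$, this is in turn equivalent to $\ker(G^*) = \{0\}$.

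The second step is to translate $\ker(G^*) = \{0\}$ into the pointwise condition of the theorem using the representation $(G^*y)(t) = B^*S^*(T-t)y$ already derived above. The equation $G^*y = 0$ in $L^2(0,T;U)$ gives $B^*S^*(T-t)y = 0$ for almost every $t \in [0,T]$; strong continuity of $S^*(\cdot)$ combined with boundedness of $B^*$ upgrades this to equality at every $t$, and the change of variable $s = T-t$ produces $B^*S^*(s)y = 0$ for all $s \in [0,T]$. The parenthetical variant with $y \in \ker(B^*)$ is automatic, since evaluating the hypothesis at $t=0$ already forces $B^*y = 0$, so the implication is vacuous for $y \notin \ker(B^*)$.

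There is no genuine obstacle: the theorem is really a compact restatement of the duality chain $\overline{\operatorname{Im}(G)} = H \iff \ker(G^*) = \{0\}$ once $G^*$ has been identified. The only minor technical point is the a.e.-to-everywhere upgrade noted above, and keeping track that the density of the affine reachable sets for every $z_0$ reduces cleanly to density of the single linear subspace $\operatorname{Im}(G)$.
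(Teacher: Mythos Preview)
Your proposal is correct and follows exactly the same route as the paper: the paper's preliminary discussion already records that approximate controllability is equivalent to $\overline{\operatorname{Im}(G)}=H$, hence to $\ker(G^*)=\{0\}$, and then uses the explicit formula $(G^*y)(t)=B^*S^*(T-t)y$ together with the substitution $t\mapsto T-t$ to obtain the stated condition. You add a few clarifying details the paper leaves implicit (the affine-translate reduction, the a.e.-to-everywhere upgrade via strong continuity, and the reason the quantifier may be restricted to $y\in\ker(B^*)$), but the argument is the same.
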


\subsection{Approximate controllability of the heat equation}

  Let $ T > 0, $ and let $\Omega $ be a bounded domain with smooth boundary $\partial\Omega.$ Suppose $ \omega $ be an open subset of $\Omega$,
and let ${\bf 1}_\omega$ denote  the characteristic function of $\omega.$ Then, we consider the following controlled system:

\begin{equation}\label{h1}
\left\{
\begin{array}{ll}
z_t(x,t) = \Delta z(x,t) + {\bf 1}_\omega u(x,t), & (x,t) \in \Omega \times (0,T), \\
z(x,t)  = 0, & (x,t) \in \partial\Omega \times (0,T), \\
z(x,0) = z_0(x), & x \in \Omega.
\end{array}
\right.
\end{equation}
Recall that the operator $ A = \Delta $ with domain $ D(A) = H_0^1(\Omega)\cap H^2(\Omega)  $ generates a $C_0$-semigroup $ S(t) $ on $ H:=L^2(\Omega), $ and consider the Hilbertian basis $\varphi_j, \; j \geq 1$ of $ L^2(\Omega) $, formed by the eigenfunctions of $A$,
associated with the eigenvalues $\mu_j, \; j \geq 1 $. \\
 We assume that the eigenvalues \( \mu_j \) of \( A \) are simple. This  condition  is satisfied, for instance, when \( n = 1 \) and \( \Omega \) is an open interval. In the case  where \( n = 2 \) and  \( \Omega=(0,a)\times (0,b) \) is a rectangular domain,  the condition holds if and only if \( \frac{a}{b} \not\in \mathbb{Q}\). For eigenvalues with multiplicity, which corresponds to several control actions, the analysis can be carried out similarly, by involving a rank condition when dealing with lumped controls,  as in \cite{curtain78}.

Let us now analyze the property of approximate controllability of (\ref{h1}). We will distinguish two situations, namely $(x,t)-$dependent controls and time-dependent controls.\\

{\bf A) Distributed controls: } Suppose $ u(t) := u(\cdot,t) \in U := L^2(\Omega)$, and consider the self-adjoint operator defined by:
$$
Bz = {\bf 1}_\omega z, \, \forall z \in L^2(\Omega).
$$
We will prove that (\ref{h1}) is approximately controllable over any interval $
[0,T]$ and for any open subset $\omega$ of $\Omega$. To this end, we shall demonstrate  that condition (\ref{obsw}) is satisfied. \\
We have the following equivalence for all $y \in L^2(\Omega)$:
$$
 {\bf 1}_\omega S(t)y = 0, \; \forall t \in [0,T]  \iff
 \sum_{j \geq 1} e^{\mu_j t} \langle y, \varphi_j \rangle \langle z, {\bf 1}_\omega \varphi_j \rangle = 0, \; \forall t \in [0,T], \forall z \in L^2(\Omega).
$$
Taking $\lambda_j=-\mu_j$ and $\alpha_j=\langle y, \varphi_j \rangle \langle z, {\bf 1}_\omega \varphi_j \rangle, \; j\ge 1,$
we can see that assumption $({\cal H}_0)$ holds. Then, according to Corollary  \ref{cor2},  we deduce that system (\ref{h1}) is approximately controllable on $[0,T]$ if and only if for all $y \in L^2(\Omega)$, we have:
$$
 \langle z, \langle y, \varphi_j \rangle {\bf 1}_\omega \varphi_j \rangle = 0, \forall z \in L^2(\Omega), \; \forall j \geq 1 \Rightarrow \langle y, \varphi_j \rangle = 0, \; \forall j \geq 1.
$$
This is equivalent to the condition: ${\bf 1}_\omega \varphi_j \ne 0$ for all $j \geq 1$, which is fulfilled for any $\omega$ with non null measure.
We conclude that the approximate controllability holds for any time $T > 0$.

\vspace{0.25cm}

{\bf B) Lumped controls: } In this part, we study the controllability of the system (\ref{h1}) with controls $ u \in L^2(0,T;\mathbb{R}) $ that do not depend on the spatial variable $x$ (i.e., $ u(\cdot,t) = u(t) \in \mathbb{R}, $ for all $t \geq 0 $). Hence, we can take $ U = \mathbb{R}, $ and the control operator $ B \in \mathcal{L}(\mathbb{R}, L^2(\Omega)) $ is defined by $ B : \lambda \mapsto \lambda {\bf 1}_\omega. $ Then, we have
$$ B^*y = \langle y, {\bf 1}_\omega \rangle,\; \forall  y \in L^2(\Omega).
 $$
 Furthermore, we have the following equivalence for all $y \in L^2(\Omega)$:
$$
 \langle  S(t)y, {\bf 1}_\omega\rangle = 0, \; \forall t \in [0,T]  \iff  \sum_{j \geq 1} e^{\mu_j t} \langle y, \varphi_j \rangle \langle  {\bf 1}_\omega, \varphi_j \rangle = 0, \; \forall t \in [0,T].
$$
According to Corollary  \ref{cor2}, the system (\ref{h1}) is approximately controllable by lumped control $u(t), $ if and only if
$$ \int_\omega \varphi_j(x) \, dx \neq 0,\; \; \forall  j \geq 1.$$

$\bullet$ Let us discuss this result in the context of the one-dimensional case: $\omega=(a,b),\; 0\le a<b\le 1$ with $ \Omega:=(0,1)$.\\

{\bf Case 1}: $a\pm b \not\in \mathbf{Q}.$

In this case, we have $\varphi_j=\sqrt{2} \sin(j\pi x)$ and $\lambda_j=-(j\pi)^2$ for all $j\ge 1.$

It follows that the condition
$$ \int_\omega \varphi_j(x) \, dx \neq 0,\; \; \forall  j \geq 1$$
 is equivalent to $a\pm b \not\in \mathbf{Q}.$

Hence, in this case, the approximate controllability is equivalent to  $a\pm b \not\in \mathbf{Q}.$

{\bf Case 2}: $a+ b \in \mathbf{Q}$ or $a- b \in \mathbf{Q}.$

In this case, we do not have global controllability.

Let us look for a subspace in which we can achieve approximate controllability.

Let $I:=\{i\in \mathbf{N}^*: \int_\omega \varphi_i(x) \, dx = 0\}$. Here, we have $I\ne \emptyset.$

Consider the closed subspace $M:=\{y\in H: BS(t)y=0,\; t\ge 0\}$. Here, we have
$$M=\left\{y\in H:\; y=\sum_{i\in I} \langle y,\varphi_i \rangle \varphi_i  \right\}.$$
Then its orthogonal subspace $V=M^\perp$ is a Banach space which, like $M$, is invariant under $S(t)$.\\
Note that $M\subset \ker(B^*)$, and so  $V\supset \ker(B^*)^\perp=\overline{\Im(B)}$. Moreover, $M$ and $V$ are  invariant under $BB^*$. Furthermore, the condition (\ref{obsw}) holds on $V.$ We conclude that the initial system (\ref{h1}) induces on $V$ (i.e., the projection of (\ref{h1}) on $V$) a system that is approximately controllable.

As an example of such a situation, in the case $\omega=(0,\frac{1}{2})$, the system (\ref{h1}) is not approximately controllable on $H=L^2(0,1).$ However, it is approximately controllable on the subspace
$$V=\left\{y\in L^2(0,1): \; y=\sum_{j\not \equiv  0 [4] } \langle y,\varphi_{j} \rangle \varphi_{j} \right\}.$$

\section{Conclusion}\label{sec13}

The analyticity of a family of exponential Dirichlet series has been established and its relevance to the moment method for approximate controllability has been demonstrated. The moment method was then revisited in the context of approximate controllability for parabolic partial differential equations, with particular emphasis on the prototypical example of heat equations controlled via localized or distributed inputs.

\end{document}